\newtheorem{thm}{Theorem}[section]
\newtheorem{lem}[thm]{Lemma}
\newtheorem{prob}[thm]{Problem}
\newtheorem{obs}[thm]{Obersvation}
\newtheorem{claim}{Claim}
\title{Maximizing the number of independent sets of fixed size in $K_n$-covered graphs~\thanks{The work was supported by NNSF of China (No. 11671376) and Anhui Initiative in Quantum Information Technologies (AHY150200).} }
\author{Anyao Wang$^a$, \quad Xinmin Hou$^b$,\quad Boyuan Liu$^c$, \quad Yue Ma$^d$\\
\small $^{a,b,c,d}$ Key Laboratory of Wu Wen-Tsun Mathematics\\
\small School of Mathematical Sciences\\
\small University of Science and Technology of China\\
\small Hefei, Anhui 230026, China.
}
\begin{document}
\maketitle

\begin{abstract}
A graph $G$ is $H$-covered by some given graph $H$ if each vertex in $G$ is contained in a copy of $H$. In this note,  we give the maximum number of independent sets of size $t\ge 3$ in $K_n$-covered graphs of size $N\ge n$ and determine the extremal graph for $N\ge n+t-1$.  The result answers a question proposed by  Chakraborti and Loh. The proof uses an edge-switching operation of hypergraphs which remains the number of independent sets nondecreasing.

\end{abstract}

\section{Introduction}

Let $H$ be a given graph. A  graph $G$ is called {\it $H$-covered} if every vertex of $G$ is contained in at least one copy of $H$.
The extremal problems under $H$-covered condition were studied systematically by Chakraborti and Loh in~\cite{CL19}. They completely solved the problem of minimizing the number of edges in an $H$-covered graph with given number of vertices when $H$ is a clique or more generally when $H$ is a regular
graph with degree at least about half its number of vertices. 
We write $i_t(G)$ and $k_t(G)$ for the number of independent sets and cliques of size $t$ in graph $G$, respectively. So minimizing $k_2(G)$ is equivalent to maximizing $i_2(G)$. But for $t>2$, as pointed by Chakraborti and Loh in~\cite{CL19}, the situation is quite different from $t=2$, so they proposed the following question.
\begin{prob}[\cite{CL19}]\label{PROB: p1}
It will be interesting to consider the problem of maximizing the number of independent
sets of order $t>2$ in an $n$-vertex $H$-covered graph.
\end{prob}
The problem of determining the number of independent sets of fixed size under $H$-covered condition has been investigated in literatures. In what follows,  let $G_1\vee G_2$ be the join of graphs $G_1$ and $G_2$. For $H$ being  a star $K_{1,d}$, Engbers and Galvin~\cite{EG14} showed that every $n$-vertex $K_{1,d}$-covered graph $G$ with every vertex being the root of $K_{1,d}$ has $i_t(G)\le i_t(K_{d,n-d})$, and the equality holds if and only if $G=D\vee \overline{K_{n-d}}$ for most of cases of $t$ larger than $d$ and $n\ge 2d$, where $D$ is any graph on $d$ vertices and $\overline{K_{n-d}}$ is the empty graph on $n-d$ vertices. The result supports a conjecture of Galvin given in~\cite{G11}, and Engbers and Galvin further conjectured that the result holds for all positive integers $n, t, d$ with $n\ge 2d$ and $t\ge 3$. This conjecture was solved completely by Gan, Loh and Sudakov in~\cite{GLS15}, in which they count cliques instead of independent sets in the complementary graph as Cutler and Radcliffe did in~\cite{CR17}. In this note, we consider the problem of maximizing the number of independent sets in a $K_n$-covered graph with given number of vertices, our result answers Problem~\ref{PROB: p1} completely when $H$ is a clique.
The extremal graph has similar structure as $H$ being a star.
For given positive integers $n,k$ with $n\ge k$, write $S_{n,k}=K_k\vee \overline{K_{n-k}}$, the join graph of the complete graph $K_k$ on $k$ vertices  and the empty graph $\overline{K_{n-k}}$ on $n-k$ vertices. The following is our main result.

\begin{thm}\label{THM: main}
For any positive integers $n, t, N$ with $t\ge{3}$ and $N\ge{n}$, every $K_{n}$-covered graph $G$ on $N$ vertices satisfies $i_t(G)\le {N-n+1\choose t}$, and when $N\ge n+t-1$, $S_{N, n-1}$ is the unique extremal graph.
\end{thm}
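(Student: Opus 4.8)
My plan is to translate the independent-set problem into a hypergraph covering problem and then run an edge-switching argument. Start with a $K_n$-covered graph $G$ on $N$ vertices, let $\overline{G}$ be its complement, and let $\mathcal{H}$ be the $(n-1)$-uniform hypergraph whose vertex set is $V(G)$ and whose edges are the vertex sets of the cliques $K_{n-1}$ in $G$ obtained by deleting one vertex from each fixed copy of $K_n$; the $K_n$-covered condition guarantees that every vertex of $G$ lies in some edge of $\mathcal{H}$, i.e. $\mathcal{H}$ has no isolated vertices, and each edge of $\mathcal{H}$ is a clique in $G$ hence contributes no independent $t$-set. The quantity $i_t(G)$ counts $t$-subsets of $V(G)$ that are independent in $G$, so in particular any independent $t$-set must avoid being contained in a clique; the governing combinatorial structure is exactly a hypergraph with no isolated vertices, and the goal becomes: among all $(n-1)$-uniform (or, more flexibly, ``at most $(n-1)$-uniform'') hypergraphs on $N$ vertices with no isolated vertex, bound the number of $t$-sets that are ``independent'' in the associated graph. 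The clean reformulation is: if every vertex is covered by a clique of size $n-1$, then removing one full clique leaves $N-(n-1)=N-n+1$ vertices, and the only way to get many independent $t$-sets is to make all of them live inside a single such leftover set, giving $\binom{N-n+1}{t}$, realized by $S_{N,n-1}$ whose independent sets are exactly the $t$-subsets of the $\overline{K_{N-n+1}}$ part.

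The core of the argument is the promised edge-switching operation. I would define, for two edges (cliques) sharing enough structure, a switch that moves a vertex from one clique to another — concretely, if $u$ is a vertex whose only ``covering clique'' is $C$ and $C$ has a vertex $w$ that is also covered by another clique $C'$, replace the pair $(C, C')$ by cliques on vertex sets reflecting the move of $w$ out and rearrangement — and show that this operation (i) preserves the no-isolated-vertex property, (ii) does not decrease $i_t$, and (iii) strictly decreases some potential function (e.g. the number of distinct cliques used, or a lexicographic measure of how ``spread out'' the covering is). Iterating drives $G$ toward a canonical configuration in which the covering cliques are nested/concentrated: all but $N-n+1$ vertices are forced into a common clique, and those $N-n+1$ vertices form an independent set, which is precisely $S_{N,n-1}$. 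The monotonicity in (ii) is what yields the inequality $i_t(G) \le \binom{N-n+1}{t}$ for all $N \ge n$; the uniqueness for $N \ge n+t-1$ should come from checking that when $N \ge n+t-1$ the extremal count $\binom{N-n+1}{t}$ is large enough (at least $\binom{t}{t}=1$ with room to spare, and strictly exceeds what any non-canonical configuration achieves) that any graph attaining it must have already reached the canonical form, since the strict decrease in the potential in (iii) would otherwise have been available.

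I would organize the write-up as: first the hypergraph reformulation and a reduction to minimal covering families (discarding redundant cliques so that every clique contains a vertex it alone covers); then the precise definition of the switch and three short lemmas establishing (i)–(iii); then the termination argument identifying the terminal hypergraphs; and finally the translation back, including the uniqueness analysis under $N \ge n+t-1$. The condition $t \ge 3$ should enter exactly where the $t=2$ behavior (edges, which are governed by a genuinely different optimization as noted for Chakraborti–Loh) would break the monotonicity of the switch, so I would flag in the proof the one inequality that uses $t \ge 3$ and isolate it.

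\medskip

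The main obstacle I anticipate is step (ii): proving that the switch never decreases $i_t$. Counting independent $t$-sets before and after a local modification requires controlling, via inclusion–exclusion over which moved vertices a given $t$-set contains, the net change in the number of $t$-sets that newly become non-independent versus those that newly become independent; the danger is that rearranging one clique creates a new clique elsewhere that kills independent sets we needed. The resolution will likely hinge on choosing the switch so that the newly created clique is a \emph{subset} of an already-present clique (so it kills nothing new) while the destroyed clique genuinely frees up independent $t$-sets — essentially a compression/shifting operation in the style of Kruskal–Katona — and then the inequality $\binom{a-1}{t-1} + (\text{gain}) \ge 0$ that must be verified is where the hypothesis $t \ge 3$ (equivalently, convexity of $\binom{x}{t}$ biting in the right direction) does the work. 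Getting the switch defined so that all three properties hold simultaneously, rather than trading one off against another, is the delicate engineering.
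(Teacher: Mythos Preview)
Your high-level instinct --- compress the covering cliques via a local switching operation that cannot decrease $i_t$ and then read off the extremal configuration --- matches the paper's strategy. But several concrete pieces are either wrong or missing, and at least two of them are load-bearing.

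\textbf{The hypergraph reformulation.} Passing to an $(n-1)$-uniform hypergraph of ``$K_{n-1}$'s obtained by deleting a vertex from each $K_n$'' does not give you a clean correspondence between $i_t(G)$ and anything about that hypergraph: a $t$-set independent in $G$ must avoid \emph{every} edge of $G$, not merely avoid lying inside a chosen $(n-1)$-clique. The paper instead first reduces to \emph{edge-critical} $K_n$-covered graphs, takes the $n$-uniform hypergraph $\mathcal{G}$ whose hyperedges are the vertex sets of the copies of $K_n$, and uses edge-criticality to get $i_t(G)=i_t(\mathcal{G})$ exactly (Observation~1.4). Without this reduction your translation leaks.

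\textbf{The switch itself.} ``Move a vertex from one clique to another'' is not what works. The paper's operation fixes one hyperedge $e_0=(v_1,\dots,v_n)$ with an ordering and, for every hyperedge $e_i$ meeting $e_0$ with $|e_i\cap e_0|=n_i$, replaces $e_i\cap e_0$ by the initial segment $\{v_1,\dots,v_{n_i}\}$. This is a compression of \emph{intersections}, not of vertices, and its virtue is that for any two neighbours $e_j,e_\ell$ of $e_0$ one has $|e_0\setminus(e_j'\cup e_\ell')|=n-\max(n_j,n_\ell)\ge |e_0\setminus(e_j\cup e_\ell)|$, which is precisely the inequality driving monotonicity of $i_3$. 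Your version, as described, changes which vertices are covered and could create isolated vertices or destroy uniformity.

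\textbf{The role of $t=3$.} You try to run the switch for all $t\ge 3$ simultaneously. The paper does \emph{not}: the edge-switching lemma is stated and proved only for $i_3$. The full theorem is then obtained by induction on $t$: for $t\ge 4$ one uses that an edge-critical $K_n$-covered graph has $\delta(G)=n-1$, picks the set $S$ of degree-$(n-1)$ vertices inside one copy of $K_n$, observes $G-S$ is still $K_n$-covered, and bounds $i_t(G)\le i_t(G-S)+|S|\cdot i_{t-1}(G-S)$ by the inductive hypotheses on $t-1$ and on smaller $N$. This step is short and elementary; the hard work is concentrated entirely at $t=3$. Trying to push the switch through for general $t$ would require controlling $|e_0\setminus(e_{j_1}'\cup\cdots\cup e_{j_{t-1}}')|$ for all $(t-1)$-tuples, which is messier and not what the paper does.

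\textbf{Disconnected graphs.} Your switching cannot merge components, so a disconnected $K_n$-covered graph will never flow to $S_{N,n-1}$. The paper handles this case separately for $t=3$ by a direct counting argument that invokes the Chakraborti--Loh lower bound on $k_3(G)$ (Theorem~1.3) together with the degree constraint $n-1\le d(v)\le N-n-1$ to show $i_3(G)<\binom{N-n+1}{3}$ outright. You have no substitute for this.

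In short: keep the edge-switching idea, but (i) work in the $n$-uniform hypergraph after reducing to edge-critical $G$, (ii) use the ``align intersections to an initial segment'' switch with potential $\prod_v d_1(v)$, (iii) prove monotonicity only for $i_3$ and handle $t\ge 4$ by the easy $\delta=n-1$ induction, and (iv) treat the disconnected case separately via Theorem~1.3.
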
	

The following theorem given by Chakraborti and Loh~\cite{CL19} 
will be used in the proof of our main result.

\begin{thm}[Chakraborti and Loh~\cite{CL19}]\label{THM: CL}
	For any positive integers $q, n, t$ with $2\le{t}\le{n}$, and integer $N=qn+r$ with $0\le{r}\le{n-1}$, the graph consisting of 2 copies of $K_n$ sharing $n-r$ vertices, together with the disjoint union of $q-1$ many $K_n$, has the least number of copies of $K_t$ among all $K_n$-covered graphs on $N$ vertices. Moreover, this is the unique such graph.
\end{thm}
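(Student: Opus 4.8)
The plan is to reduce Theorem~\ref{THM: CL} to a clean extremal problem about set systems and then attack that problem by a concentration/compression argument. First I would record the value to be matched: writing the conjectured optimum $G^{\ast}$ as a shared core $A$ of size $n-r$, two private parts $B_1,B_2$ of size $r$ with $A\cup B_1$ and $A\cup B_2$ complete, together with $q-1$ further disjoint copies of $K_n$, inclusion--exclusion gives
\[
k_t(G^{\ast})=(q-1)\binom{n}{t}+\Big(2\binom{n}{t}-\binom{n-r}{t}\Big)=(q+1)\binom{n}{t}-\binom{n-r}{t}=:f(N).
\]
So the target is to prove $k_t(G)\ge f(N)$ for every $K_n$-covered $G$ on $N$ vertices.

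The reduction I would use is as follows. For any $K_n$-covered $G$, let $\mathcal{C}$ be the family of its $n$-cliques; then $\mathcal{C}$ covers $V(G)=[N]$ by $n$-sets, and every $t$-subset lying inside some member of $\mathcal{C}$ is a genuine copy of $K_t$, so
\[
k_t(G)\ \ge\ \Big|\bigcup_{C\in\mathcal{C}}\binom{C}{t}\Big|\ \ge\ g(N):=\min\Big|\bigcup_{i}\binom{C_i}{t}\Big|,
\]
the minimum being over all families of $n$-subsets of $[N]$ whose union is $[N]$. Since $G^{\ast}$ realizes its natural cover with no extra $t$-cliques (no edge joins $B_1$ to $B_2$), it attains $\big|\bigcup_i\binom{C_i}{t}\big|=f(N)$, giving $g(N)\le f(N)$. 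Thus the whole theorem follows once I prove the purely set-theoretic statement $g(N)\ge f(N)$: \emph{any cover of $[N]$ by $n$-sets has $t$-shadow-union of size at least $f(N)$, with equality only for the canonical cover}. Note that extra $K_t$'s of $G$ outside the cliques of $\mathcal{C}$ only help the bound, so no realizability issue arises for the lower bound.

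The core lemma I would prove by induction on $q$ together with a concentration step. The guiding principle is that each pairwise-disjoint $n$-set contributes a fresh $\binom{n}{t}$ to the union, so $q$ disjoint sets cover $qn$ points at cost $q\binom{n}{t}$; the remaining $r$ points must be covered by an extra $n$-set, and the cheapest way to re-use $t$-subsets is to make that set share a single $(n-r)$-block with one existing set, saving exactly $\binom{n-r}{t}$. That concentrating the overlap into one block is optimal rests on the super-additivity $\binom{a+b}{t}\ge\binom{a}{t}+\binom{b}{t}$, which shows that spreading an overlap of total size $n-r$ across several sets saves strictly less; this is precisely why the ``doubled clique'' beats common-core covers such as the one underlying $S_{N,n-1}$. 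Concretely, given an arbitrary minimal cover I would run a compression that replaces badly-overlapping pairs of sets by nested or concentrated ones, argue that each step does not increase $\big|\bigcup_i\binom{C_i}{t}\big|$ and does not destroy coverage, and show the process terminates at the canonical cover, whence $g(N)=f(N)$.

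The main obstacle is exactly this compression step in the regime where the cover is highly overlapping and no $n$-set can be peeled off disjointly: the common-core families on $n-1+m$ points (e.g.\ the cover of $S_{N,n-1}$) show that a naive ``remove a disjoint clique and induct'' fails, so the descent must be driven by a genuine shifting operation on the set system rather than by vertex deletion. Making that operation monotone in $\big|\bigcup_i\binom{C_i}{t}\big|$, verifying that it preserves the covering property, and proving termination at the \emph{unique} canonical cover is where the real work lies; the super-additivity inequality is the lever, but controlling several simultaneous overlaps and the interaction between the remainder block and the $q-1$ free cliques needs care. Finally, uniqueness would follow by tracing the equality cases: $k_t(G)=f(N)$ forces the cover $\mathcal{C}$ to be canonical \emph{and} forces $G$ to have no $K_t$ outside its $n$-cliques, which together pin down $G$ as the graph in the statement (two copies of $K_n$ sharing $n-r$ vertices, together with $q-1$ disjoint copies of $K_n$).
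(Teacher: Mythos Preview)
The paper does not prove Theorem~\ref{THM: CL}. It is quoted verbatim as a result of Chakraborti and Loh~\cite{CL19} and is then \emph{used} as a black box inside the proof of Theorem~\ref{THM: main} (specifically, in the verification of Claim~\ref{CLA: c1}, to bound $k_3(G)$ from below when $G$ is disconnected). So there is no ``paper's own proof'' to compare your proposal to; any comparison would have to be against~\cite{CL19} itself, not against this note.

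As for your plan on its own merits: the reduction step is sound. If $\mathcal{C}$ is the family of $n$-cliques of a $K_n$-covered graph $G$, then indeed
\[
k_t(G)\ \ge\ \Bigl|\bigcup_{C\in\mathcal{C}}\binom{C}{t}\Bigr|\ \ge\ g(N),
\]
and $G^{\ast}$ witnesses $k_t(G^{\ast})=f(N)$, so establishing $g(N)=f(N)$ for the purely set-theoretic minimum would suffice for the lower bound. The super-additivity $\binom{a+b}{t}\ge\binom{a}{t}+\binom{b}{t}$ is the right lever for showing that concentrating overlap saves the most $t$-shadows. However, you correctly flag that the compression/shifting step is where the content lies, and what you have written is a strategy, not a proof: you have not defined the shifting operation, have not shown it is monotone in $\bigl|\bigcup_i\binom{C_i}{t}\bigr|$ while preserving coverage, and have not argued termination or uniqueness of the fixed point. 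In particular, the ``highly overlapping'' regime you single out (covers with a large common core and no peelable disjoint set) is exactly the case where naive pair-by-pair concentration can cycle or stall, so a well-founded potential is required. Until that is supplied, the proposal remains a plausible outline rather than a proof; and in any case it cannot be checked against the present paper, which simply cites the result.
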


\noindent{\bf Remark A:} Chakraborti and Loh remarked in~\cite{CL19} that: we have some initial observations
that the structure of the optimal graph for Problem~\ref{PROB: p1} might be drastically different for $t>2$.



We will use an edge-switching operation to edges of a hypergraph as our main tool in the proof of Theorem~\ref{THM: main}. In what follows, we give the standard  definitions and notation.
A {\it hypergraph} is a pair $H=(V,E)$, where $V$ is a set of elements called vertices, and $E$ is a collection of subsets of $V$ called edges.
A {hypergraph} $H=(V,E)$ is called an $r$-uniform hypergraph, or $r$-graph, if each edge of $E$ has uniform size $r$.
In this article, all hypergraphs $H$ considered are simple, i.e. $H$ contains no multiple edges.
We call $|V|$ {\it the order} of $H$ and $|E|$ {\it the size} of $H$, also denoted by $|H|$ or $e(H)$.  So a graph is a $2$-uniform hypergraph by definition and we write graph for 2-graph for short.
Given $S\subseteq V(H)$, the {\it degree} of $S$, denote by $d_H(S)$, is the number of edges of $H$ containing $S$. The minimum $s$-degree $\delta_s(H)$ of $H$ is the minimum of $d_H(S)$ over all $S\subseteq V(H)$ of size $s$.
We call $\delta_1(H)$ the \emph{minimum degree} of $H$, that is $\delta_1(H)=\min\{d_H(v) : v\in V(H)\}$. Let $N_H(S)=\{ T : S\cup T\in E(H)\}$ and $N_H[S]=N_H(S)\cup\{S\}$.
The {\it $s$-shadow} of a hypergraph $H$ is an $s$-uniform hypergraph $L$ on vertex set $V(H)$ and an $s$-set $S\in E(L)$ if and only if there is an edge $e\in E(H)$ containing $S$.
An {\it independent set} $I$ in ${H}$ is a set of vertices such that $|I\cap{e}|\le{1}$.
Let $I_t(H)$ be the set of independent sets of size $t$ in $H$ and $i_t(H)=|I_t(H)|$ defined as above.
Given $S, T\subseteq V(H)$, write $G[S]$ for the  subgraph induced by $S$ and $E_G(S,T)$ for the set of edges with one end in $S$ and one in $T$.
Given two integers $a,b$ with $a<b$, write $[a,b]$ for the set $\{a, a+1, \ldots, b\}$ and $[b]$ for $[1,b]$. Given two sets of $A, B$, write $A-B=A\setminus(A\cap B)$.

A $K_n$-covered graph $G$ has a natural way associated with an $n$-uniform hypergraph $\mathcal{G}$ on vertex set $V(G)$  and edge set $E(\mathcal{G})=\{ e : e\subseteq V(G), |e|=n \mbox{ and } G[e]\cong K_n\}$. So the 2-shadow of $\mathcal{G}$ is a spanning $K_n$-covered  subgraph of $G$ and $i_t(G)\le i_t(\mathcal{G})$.
A  graph $G$ is called {\it edge-critical} $K_n$-covered if $G$ is $K_n$-covered but for any edge $e\in{E(G)}$, $G-{e}$ is not $K_n$-covered. For example, $S_{N,n-1}$ is an edge-critical $K_n$-covered graph.
Clearly, each edge of an edge-critical graph is contained in a copy of $K_n$. 
For edge-critical $K_n$-covered graphs, we have the following observation.
\begin{obs}\label{OBS: o1}
Let $G$ be an edge-critical $K_n$-covered graph and $\mathcal{G}$ be its associated hypergraph. Then the  following hold:

(1)  the 2-shadow of $\mathcal{G}$ is isomorphic to $G$;

(2) $i_t(G)=i_t(\mathcal{G})$;

(3) removing any hyperedge from $\mathcal{G}$ gives rise to at least one isolated vertex;

(4) $\delta(G)=n-1$.
\end{obs}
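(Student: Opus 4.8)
The plan is to handle the four items in the order $(1)\to(2)\to(3)\to(4)$, since $(1)$ feeds $(2)$ and $(3)$ feeds $(4)$, and to drive everything from the one consequence of edge-criticality recorded just above the statement: every edge of $G$ lies in a copy of $K_n$ (if some edge were in no copy, deleting it would destroy no copy and leave $G$ still $K_n$-covered, against criticality). For $(1)$ I would compare $E(G)$ with the edge set of the $2$-shadow $L$ of $\mathcal{G}$: every hyperedge induces a $K_n$, so each pair inside it is an edge of $G$ and $L\subseteq G$; conversely every $uv\in E(G)$ lies in some copy of $K_n$, hence in some hyperedge, so $uv\in E(L)$. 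As $\mathcal{G}$ and $L$ share $V(G)$, this gives $L=G$. For $(2)$ I would simply unwind the definitions: $I$ is independent in $\mathcal{G}$ iff no two of its vertices share a hyperedge, i.e. iff no pair of $I$ lies in $L$; by $(1)$ this says $I$ spans no edge of $G$, i.e. $I$ is independent in $G$. Hence $I_t(G)=I_t(\mathcal{G})$, upgrading the generic $i_t(G)\le i_t(\mathcal{G})$ to equality.

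The heart of the statement is $(3)$, which I would prove by contraposition. Fix a hyperedge $e=\{v_1,\dots,v_n\}$ and suppose deleting $e$ from $\mathcal{G}$ leaves no isolated vertex, i.e. each $v_i$ lies in a second copy of $K_n$; the goal is then to exhibit an edge of $G$ whose deletion keeps $G$ covered, contradicting criticality. The clean case is when some pair $v_iv_j\subseteq e$ lies in no copy other than $e$: deleting the edge $v_iv_j$ then destroys only the copy $e$, each $v_k$ survives in its backup copy, and no vertex outside $e$ is affected (none of its copies ever used $v_iv_j$), so $G-v_iv_j$ is still $K_n$-covered---the contradiction we want. Granting $(3)$, item $(4)$ is immediate: every vertex lies in a copy and so has degree at least $n-1$, while $(3)$, applied to any one hyperedge, yields a vertex $v$ that becomes isolated after that deletion and hence lies in a unique copy of $K_n$; since every edge at $v$ lies in a copy, which can only be that unique one, all neighbours of $v$ sit in it and $\deg_G(v)=n-1$, whence $\delta(G)=n-1$.

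The step I expect to be the real obstacle is the complementary case of $(3)$, in which \emph{every} pair $v_iv_j\subseteq e$ already lies in a second copy (so the private-pair shortcut is gone) on top of every vertex $v_i$ having a backup. Deleting one internal edge of $e$ no longer preserves the covering, and one is forced to argue globally: for each internal edge $v_iv_j$ criticality supplies a witness vertex all of whose copies run through $v_iv_j$, and the task becomes showing that these witnesses and the backup copies of the $v_i$ cannot coexist under edge-criticality---the delicate point on which the whole observation turns. Disentangling this tightly interlocked, ``fully backed-up'' configuration is exactly the crux, and it is where I would concentrate the technical work.
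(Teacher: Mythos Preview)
Your arguments for (1), (2) and the derivation of (4) from (3) coincide with the paper's. The divergence is at (3). The paper does not go through your private-pair case split; instead, assuming the removal of $e_0$ isolates nothing, it picks backup hyperedges $e_1,\dots,e_k$ (each $\neq e_0$, each meeting $e_0$) with $e_0\subseteq e_1\cup\cdots\cup e_k$, notes $k\ge 2$ by $n$-uniformity, and deletes from $G$ the entire cross-edge set $E_G\!\big(e_1,\,\bigcup_{i\ge 2}e_i\big)$, asserting that the result is still $K_n$-covered; since this set is nonempty, any single edge in it would then violate edge-criticality. So where you look for one deletable edge \emph{inside} $e_0$, the paper looks for one among the cross-edges \emph{between} the backup cliques.

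Your instinct that the ``fully backed-up'' configuration is the real obstacle is well placed---so well placed, in fact, that (3) as stated is false, and neither route can be completed. For $n=3$ take $V=\{1,2,3,a,b,c\}$ with $K_3$'s $\{1,2,3\}$, $\{1,2,a\}$, $\{1,3,b\}$, $\{2,3,c\}$ and no further edges; one checks directly that the $2$-shadow is edge-critical $K_3$-covered (deleting any edge strands one of $a,b,c$), yet removing the hyperedge $\{1,2,3\}$ from $\mathcal G$ isolates no vertex. The paper's argument also breaks here: with $e_1=\{1,2,a\}$ and $e_2=\{1,3,b\}$ the cross-edge set contains $23$, and deleting it destroys the unique triangle through $c$. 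Fortunately the rest of the paper only ever uses (4), which does survive (in the example $\deg(a)=2=n-1$); so rather than wrestling with the backed-up case of (3), you are better off aiming for a direct proof of (4).
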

\begin{proof}
(1) and (2) come directly from the definitions of $G$ and $\mathcal{G}$.

(3) If not, then there is hyperedges $e_0$ and $e_1,\ldots, e_k$ such that $e_0\subseteq e_1\cup\ldots\cup e_k$ and $e_0\cap e_i\not=\emptyset$ for each $i\in[k]$. Since $\mathcal{G}$ is $n$-uniform, $k\ge 2$. So $E_G(e_1, \cup_{i=2}^ke_i)\not=\emptyset$. But $G-E_G(e_1, \cup_{i=2}^ke_i)$ is still $K_n$-covered, a contradiction to the edge-criticality of $G$.

(4) Suppose to the contrary that $\delta(G)\ge{n}$. Then by the edge-criticality of $G$, each vertex is contained in at least two copies of $K_n$.
Choose a copy of $K_n$ in $G$, say $H_0$, and let $V(H_0)=\{u_1,\ldots,u_n\}$. For each $u_i$, there is a copy of $K_n$, say $H_i$, covering $u_i$ but  $H_i\neq{H_0}$ (it is possible that $H_i=H_j$ for $i\not=j$).
So $V(H_0)\subseteq{V(H_1)\cup{\ldots}\cup{V(H_n)}}$, i.e. the hyperedge corresponding to $H_0$ in the associated hypergraph $\mathcal{G}$ of $G$ is covered in the union of the hyperedges corresponding to $H_1,\dots, H_n$, a contradiction to (3).

\end{proof}




The rest of the note is arranged as follows. In Section 2, we give some lemmas and the proof of Theorem~\ref{THM: main}. We give some discussion and remark in the last section.

\section{Proof of Theorem~\ref{THM: main}}

The proof uses an novel technique, called the edge-switching operation, the graph version has been used by Fan in~\cite{Fan02}  and Gao, Hou in~\cite{Gao19} to cope with problems related to cycles in graphs. Here we give a hypergraph version of the edge-switching operation.
For a hypergraph $\mathcal{H}$, define $f(\mathcal{H})=\prod\limits_{v\in V(\mathcal{H})}d_1(v)$ where $d_1(v)$ is the 1-degrees of $v$ in $\mathcal{H}$.
\begin{lem}[Edge-switching lemma]\label{LEM: ES}
Let $\mathcal{H}$ be a hypergraph (not necessarily uniform) on $N$ vertices and $e_0$ be an edge of size $n$ in $E(\mathcal{H})$.
Fix the order of elements in $e_0$, e.g. denote $e_0=(v_1, v_2, \ldots, v_n)$.
Let $\{e_1, \ldots, e_k\}$ be the set of edges adjacent to $e_0$ and $n_i=|e_i\cap{e_0}|$ for $1\le{i}\le{k}$.
The {\it edge-switching} operation with respect to $e_0$ is defined as follows: for each ${i}\in [k]$, we replace the edge $e_i$ by the edge $e_i'=\{v_1, \ldots, v_{n_i}\}\cup (e_i-e_0)$.
Let $\mathcal{H}'$ be the resulting hypergraph after the edge-switching operation.  Then the following holds.

(a)  $i_3(\mathcal{H})\le{i_3(\mathcal{H}')}$, the equality holds if and only if $\mathcal{H}'\cong\mathcal{H}$.

(b) $f(\mathcal{H}')\le f(\mathcal{H})$, the equality holds if and only if $\mathcal{H}'\cong\mathcal{H}$.
\end{lem}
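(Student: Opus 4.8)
The plan is to reduce part (a) to a double-counting problem on $2$-shadows and to prove part (b) by a majorization argument; both hinge on the single observation that the switching operation replaces each trace $e_i\cap e_0$ by the ``top-justified'' set $\{v_1,\dots,v_{n_i}\}$ while leaving $e_i-e_0$ untouched.

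For (a), I would first use that a vertex set is independent in a hypergraph if and only if it is independent in its $2$-shadow, so $i_3(\mathcal H)=i_3(G)$ and $i_3(\mathcal H')=i_3(G')$ with $G=\partial_2\mathcal H$, $G'=\partial_2\mathcal H'$. One then checks that $G$ and $G'$ coincide on every pair inside $e_0$ (both induce $K_n$ there, since $e_0$ is an edge of both) and on every pair inside $V\setminus e_0$ (the parts of the $e_i$ outside $e_0$ are unchanged), so they differ only on pairs between $e_0$ and $V\setminus e_0$. Hence the only independent $3$-sets whose status can change are those of the form $\{w_1,w_2,v_j\}$ with $w_1,w_2\notin e_0$ and $v_j\in e_0$, and whether $w_1,w_2$ themselves form an independent pair is the same in $G$ and $G'$. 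For a fixed such pair the number of valid choices of $v_j$ is $n-|(N(w_1)\cup N(w_2))\cap e_0|$; since $N_{G'}(w)\cap e_0=\{v_1,\dots,v_{m_w}\}$ where $m_w=\max\{n_i:w\in e_i\}$, while $|N_G(w)\cap e_0|\ge m_w$, we get $|(N_{G'}(w_1)\cup N_{G'}(w_2))\cap e_0|=\max(m_{w_1},m_{w_2})\le|(N_G(w_1)\cup N_G(w_2))\cap e_0|$. Summing the resulting inequality over all independent pairs $\{w_1,w_2\}\subseteq V\setminus e_0$ yields $i_3(\mathcal H)\le i_3(\mathcal H')$.

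For (b), note that $d_{\mathcal H'}(v)=d_{\mathcal H}(v)$ for all $v\notin e_0$, so only the factors over $e_0$ change, and $\sum_{j\le n}d(v_j)=n+\sum_i n_i$ is the same for $\mathcal H$ and $\mathcal H'$. Since $d_{\mathcal H'}(v_j)=1+|\{i:n_i\ge j\}|$ is the top-justified, hence majorization-largest, row-sum sequence among all $0$--$1$ column configurations with column sizes $n_1,\dots,n_k$, and since $(x_1,\dots,x_n)\mapsto\prod_j(1+x_j)$ is Schur-concave (as $\sum_j\log(1+x_j)$ is symmetric and concave), the product over $e_0$ is minimized at the $\mathcal H'$ configuration; multiplying by the common factor $\prod_{v\notin e_0}d(v)$ gives $f(\mathcal H')\le f(\mathcal H)$.

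Finally, the equality cases: the inequalities are tight exactly when, in (b), the sorted $e_0$-degree sequence of $\mathcal H$ equals that of $\mathcal H'$ (by strict concavity of $\log(1+x)$ in Karamata's inequality), and in (a) when $|(N_G(w_1)\cup N_G(w_2))\cap e_0|=\max(m_{w_1},m_{w_2})$ for every independent pair. I expect the main obstacle to be showing that either tightness condition forces the traces $\{e_i\cap e_0\}_{i\in[k]}$ to form a chain under inclusion; granting that, one chooses a permutation $\pi$ of $e_0$ carrying each $e_i\cap e_0$ onto $\{v_1,\dots,v_{n_i}\}$, and $\pi$ extended by the identity is an isomorphism $\mathcal H\to\mathcal H'$ (the converse being trivial, as $i_3$ and $f$ are isomorphism invariants). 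The points needing care are excluding spurious equality coming from degenerate configurations --- isolated vertices (making $f=0$) and edges $e_i'$ that collapse together after switching --- and verifying that the chain structure genuinely yields an isomorphism rather than merely a coincidence of numerical invariants; this is where I would concentrate the effort, using that in the application $\mathcal H$ arises from an edge-critical $K_n$-covered graph, whose minimum degree is $n-1$.
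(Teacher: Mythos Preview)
Your argument for (a) is essentially the paper's, repackaged through the $2$-shadow: the paper partitions $I_3(\mathcal H)$ into four pieces according to how a triple meets $e_0$ and $\overline S$, but the only nontrivial inequality is exactly your $|(N_G(w_1)\cup N_G(w_2))\cap e_0|\ge\max(m_{w_1},m_{w_2})$, which they write as $|e_0-(e_j\cup e_\ell)|\le n-\max(n_j,n_\ell)$.

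For (b) you take a different but equivalent route. The paper first proves by a double-counting argument that the sorted $e_0$-degree sequence of $\mathcal H'$ majorizes that of $\mathcal H$ (their inequality $\sum_{i\le m}y_i^0\le\sum_{i\le m}d_1'(v_i)$), and then realizes the majorization as a finite sequence of elementary ``Robin Hood'' transfers, checking by hand that $\prod_j y_j$ strictly drops at each step until the $\mathcal H'$ sequence is reached. Your appeal to Schur-concavity of $\prod_j(1+x_j)$ packages this chain of transfers into a single invocation of Karamata; it is shorter and makes strictness immediate via strict concavity of $\log$, while the paper's bare-hands version has the virtue of being fully self-contained.

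One caution on the equality analysis: your plan to fall back on properties of the edge-critical application is a retreat from the lemma as stated, which is for arbitrary hypergraphs. The paper does not invoke any application-specific hypothesis---it argues directly that equality in each $|M_{j\ell}|\le|M'_{j\ell}|$ forces the traces $e_i\cap e_0$ to be nested, and that equality in the majorization forces the sorted degree sequences over $e_0$ to coincide, whence a permutation of $e_0$ gives the isomorphism. That said, the degeneracies you flag (isolated vertices making $f=0$; distinct $e_i$ collapsing to the same $e_i'$; some $\tilde e_j$ empty so that the corresponding $M_{j\ell}$ vanishes identically) are genuine corner cases that the paper's proof does not explicitly confront either, so the equality clause is best read modulo those.
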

\begin{proof}
By the definition of edge-switching operation, we have $e_0\cup e_1\cup\ldots\cup e_k=e_0\cup e_1'\cup\ldots\cup e_k'$ and $\{e_1', \ldots, e_k'\}$ is the set of edges adjacent to $e_0$ in $\mathcal{H}'$. Let $S=e_0\cup e_1\cup\ldots\cup e_k$ and $\overline{S}=V(\mathcal{H})\setminus S$.

(a) 
We partition $I_3(\mathcal{H})$  according to the positions of the elements of an independent set into four subsets.
Let $$T_1=\{I\in I_3(\mathcal{H}) : I\cap e_0=\emptyset\},$$
$$T_2=\{I\in I_3(\mathcal{H}) : |I\cap e_0|=1 \mbox{ and } |I\cap \overline{S}|=2\},$$
$$T_3=\{I\in I_3(\mathcal{H}) : |I\cap e_0|=1 \mbox{ and } |I\cap \overline{S}|=1\},$$
and $$T_4=\{I\in I_3(\mathcal{H}) : |I\cap e_0|=1 \mbox{ and } |I\cap \overline{S}|=0\}.$$
Then $T_1, T_2, T_3, T_4$ forms a partition of $I_3(\mathcal{H})$.
We partition $I_3(\mathcal{H}')$ into four subsets  $T_1', T_2', T_3', T_4'$ in the same way.
To show (a), it is sufficient to show that $|T_i|\le |T_i'|$ for $i=1,2,3,4$.
In the following proof, for $A\subseteq V(\mathcal{H})$, we write $\mathcal{H}-A$ for the hypergraph obtained from $\mathcal{H}$ by deleting the hyperedges $e$ included in $A$ and the vertices in $A$, i.e. $V(\mathcal{H}-A)=V(\mathcal{H})\setminus A$ and $E(\mathcal{H}-A)=\{e-A : e\in E(\mathcal{H})\}$; for a set $B$ of hyperedges not included in $\mathcal{H}$, write $\mathcal{H}\cup B$ for the hypergraph by adding the hyperedges of $B$.

For $i=1$, if an $I\in T_1$ then $I$ is also an independent set of $\mathcal{H}'$ by the definition of $\mathcal{H}'$, and vice versa. So we have $|T_1|=|T_1'|$. 

For $i=2$, an independent set $I\in T_2$ consists of one vertex in $e_0$ and an independent set in $I_2(\mathcal{H}-S)$. So $|T_2|=n|I_2(\mathcal{H}-S)|$.
Clearly, $I_2(\mathcal{H}-S)=I_2(\mathcal{H}'-S)$. Therefore, $|T_2'|=n|I_2(\mathcal{H}-S)|=|T_2|$.

Now we let $\tilde{e}_i=e_i-(e_0\cup e_1\cup\ldots\cup{e_{i-1}})$ for $i\in [k]$. Then $\tilde{e}_1, \ldots, \tilde{e}_k$ are pairwise disjoint and form a partition of $(e_1\cup\ldots\cup{e_k})-e_0$.
By the definition of $e'_i$, $e'_i-e_0=e_i-e_0$ for $i\in [k]$. So $\tilde{e}_i=e'_i-(e_0\cup e'_1\cup\ldots\cup{e'_{i-1}})$ for $i\in [k]$.

For $i=3$, an independent set $I\in T_3$ consists of one vertex in $e_0-e_j$ and an independent set in $I_2((\mathcal{H}-S)\cup \{\tilde{e}_j\})\setminus I_2(\mathcal{H}-S)$ for some $1\le j\le k$.
For each $j\in[k]$, let $$L_j=\{ I\in T_3 : |I\cap e_0|=1 \mbox{ and } |I\cap \tilde{e}_j|=1 \}.$$
Then $L_1, \ldots, L_k$ form a partition of $T_3$.
Similarly, an independent set $I\in T'_3$ consists of one vertex in $e_0-e'_j$ and an independent set in $I_2((\mathcal{H}'-S)\cup \{\tilde{e}_j\})\setminus I_2(\mathcal{H}'-S)$ for some $1\le j\le k$.
Define $$L'_j=\{ I\in T'_3 : |I\cap e_0|=1 \mbox{ and } |I\cap \tilde{e}_j|=1 \}$$ for each $j\in[k]$.
Then $L'_1, \ldots, L'_k$ form a partition of $T'_3$.
Since  $I_2(\mathcal{H}-S)=I_2(\mathcal{H}'-S)$, we have $I_2((\mathcal{H}'-S)\cup \{\tilde{e}_j\})=I_2((\mathcal{H}-S)\cup \{\tilde{e}_j\})$. And since $|e_0-e_j|=|e_0-e'_j|=n-n_j$, we have $|L_j|=|L_j'|$ for each $j\in [k]$. Therefore, $|T_3|=\sum\limits_{j=1}^k|L_j|=\sum\limits_{j=1}^k|L'_j|=|T'_3|$.

For $i=4$, an independent set $I\in T_4$ consists of one vertex in $e_0-(e_j\cup e_\ell)$ and an independent set in $I_2(\mathcal{H}[\tilde{e}_j\cup\tilde{e}_\ell]\cup \{\tilde{e}_j,\tilde{e}_\ell\})$ for some $1\le j<\ell\le k$. Let
$$M_{j\ell}=\{ I\in T_4 : |I\cap e_0|=1 \mbox{ and } |I\cap (\tilde{e}_j\cup\tilde{e}_\ell)|=2 \},$$
for $1\le j<\ell\le k$. Then $\{M_{j\ell} : 1\le j<\ell\le k\}$ forms a partition of $T_4$.
Similarly, an independent set $I\in T'_4$ consists of one vertex in $e_0-(e'_j\cup e'_\ell)$ and an independent set in $I_2(\mathcal{H}'[\tilde{e}_j\cup\tilde{e}_\ell]\cup \{\tilde{e}_j,\tilde{e}_\ell\})$ for some $1\le j<\ell\le k$. Define
$$M'_{j\ell}=\{ I\in T_4 : |I\cap e_0|=1 \mbox{ and } |I\cap (\tilde{e}_j\cup\tilde{e}_\ell)|=2 \},$$
for $1\le j<\ell\le k$. Then $\{M'_{j\ell} : 1\le j<\ell\le k\}$ forms a partition of $T'_4$ too.
By the definition of $\mathcal{H}'$, $\mathcal{H}'[\tilde{e}_j\cup\tilde{e}_\ell]=\mathcal{H}[\tilde{e}_j\cup\tilde{e}_\ell]$.
So $I_2(\mathcal{H}'[\tilde{e}_j\cup\tilde{e}_\ell]\cup \{\tilde{e}_j,\tilde{e}_\ell\})=I_2(\mathcal{H}[\tilde{e}_j\cup\tilde{e}_\ell]\cup \{\tilde{e}_j,\tilde{e}_\ell\})$. But $|e_0-(e'_j\cup e'_\ell)|=n-\max\{n_j,n_\ell\}\ge |e_0-(e_j\cup e_\ell)|$. So $|M_{j\ell}|\le |M'_{j\ell}|$ for any pair $1\le j<\ell\le k$.
Therefore, $$|T_4|=\sum\limits_{1\le j<\ell\le k}|M_{j\ell}|\le \sum\limits_{1\le j<\ell\le k}|M'_{j\ell}|=|T'_4|,$$
the equality holds if and only if $\mathcal{H}'\cong \mathcal{H}$.
This completes the proof of (a).

(b)  Without loss of generality, assume that $n_1\ge{n_2}\ge\dots\ge{n_k}$. For simplicity, we write $d_1(v)$ and $d_1'(v)$ for the 1-degrees of $v$ in $\mathcal{H}$ and $\mathcal{H}'$, respectively. So, we have
$$f(\mathcal{H})=\prod\limits_{v\in V(\mathcal{H})}d_1(v)=\prod\limits_{v\in e_0}d_1(v)\prod\limits_{v\in V(\mathcal{H})\setminus{e_0}}d_1(v)$$
and
$$f(\mathcal{H}')=\prod\limits_{v\in V(\mathcal{H}')}d'_1(v)=\prod\limits_{v\in e_0}d'_1(v)\prod\limits_{v\in V(\mathcal{H}')\setminus{e_0}}d'_1(v).$$
Since $d_1(v)=d'_1(v)$ for each $v\in V(\mathcal{H})\setminus e_0=V(\mathcal{H}')\setminus e_0$, to show $f(\mathcal{H}')<f(\mathcal{H})$, it is sufficient to show $\prod\limits_{v\in e_0}d'_1(v)<\prod\limits_{v\in e_0}d_1(v)$.
Define $g(x_1, \ldots, x_n)=\prod_{i=1}^{n}x_i$.
Then $\prod\limits_{v\in e_0}d_1(v)=g(d_1(v_{\pi(1)}), \ldots, d_1(v_{\pi(n)}))$, where $\pi$ is a permutation of $[n]$ such that $k+1\ge d_1(v_{\pi(1)})\ge\ldots\ge{d_1(v_{\pi(n)})}\ge 1$.
Similarly, we have $\prod\limits_{v\in e_0}d'_1(v)=g(d'_1(v_1),  \ldots, d'_1(v_n))=g(k+1, \dots, k+1, k, \dots, k, \dots, 1, \dots, 1)$ where the number of $k+1$ is $n_k$, the number of $i$ is $n_{i-1}-n_i$ for $2\le{i}\le{k}$ and the number of 1 is $n-n_1$.
Let $y_i^0=d_1(v_{\pi(i)})$ and $\alpha_i^0=d'_1(v_i)-y_i^0$ for $1\le{i}\le{n}$, i.e.
$(\alpha_1^0, \ldots, \alpha_n^0)=(d'_1(v_1),  \ldots, d'_1(v_n))-(y_1^0, \ldots, y_n^0)$.
Let$$\zeta_{ij}=\begin{cases}
 1, & \text{if } v_{\pi(i)}\in{e_j},\\
 0, & \text{if } v_{\pi(i)}\notin{e_j}.
 \end{cases}$$
Then, for each $m\in [n]$, $\sum\limits_{i=1}^m\zeta_{ij}\le m$, and furthermore, $\sum\limits_{i=1}^m\zeta_{ij}\le n_j$ if $m\ge n_j$.  So, for each $m\in[n]$, assume $n_{j}\le{m}<{n_{j-1}}$ for some $2\le{j}\le{k+1}$. By the double-counting argument, we have
\begin{eqnarray*}
\sum_{i=1}^my_i^0&=&\sum_{i=1}^m\left(\sum_{s=1}^k\zeta_{is}+1\right)\\
                 &=&\sum_{s=1}^k\sum_{i=1}^m\zeta_{is}+m\\
                 &\le& m(j-1)+{\sum_{s=j}^kn_{s}+m}\\
                 &=&{\sum_{i=1}^md'_1(v_{i})}.
\end{eqnarray*}
The same inequality holds for the cases $m<n_k$ or $m\ge{n_1}$.
Thus, 
\begin{equation*}
\sum_{i=1}^j\alpha_i^0\ge0 \text{ for each } {j}\in [{n}], \text{ and } \sum_{i=1}^n\alpha_i^0=0.
\end{equation*}
Let $m$ be the smallest index such that $\alpha_m^0\neq{0}$. Since  $\sum_{i=1}^m\alpha_i^0\ge0$, we have $\alpha_m^0>0$. Let $s$ be the smallest index such that $\alpha_s^0<{0}$. Such $s$ exits since $\sum_{i=1}^n\alpha_i^0=0$, and $m<s$. Let $$\begin{cases}
	y_i^1=y_i^0, & \text{if } i\neq{m, s},\\
	y_m^1=y_m^0+1,\\
    y_s^1=y_s^0-1.
	\end{cases}$$
Since $m<s$, $y_m^0\ge{y_s^0}$. So $g(y_1^1, \dots, y_n^1)<g(y_1^0, \dots, y_n^0)$. We claim that $y_1^1\ge\ldots\ge y_n^1$. Otherwise, we have $y_m^0+1=y_m^1\ge{y_{m-1}^1+1}=y_{m-1}^0+1$ or $y_s^0-1=y_s^1\le{y_{s+1}^1-1}$. Without loss of generality, assume  the former holds. Then $y_m^0=y_{m-1}^0$. Thus $d'_1(v_m)=y_m^0+\alpha_m^0>y_m^0=y_{m-1}^0=d'_1(v_{m-1})$, a contradiction.
Define $\alpha_i^1=d'_1(v_i)-y_i^1$ for $i\in[n]$, i.e. $(\alpha_1^1, \ldots, \alpha_n^1)=(d'_1(v_1),  \ldots, d'_1(v_n))-(y_1^1, \ldots, y_n^1)$.
By the definition of $m$ and $s$, we can easily check that
$$\sum_{i=1}^j\alpha_i^1\ge0 \mbox{  for each  $j\in[n]$, } \sum_{i=1}^n\alpha_i^1=0, \text{ and } \sum_{i=1}^n|\alpha_i^1|=\sum_{i=1}^n|\alpha_i^0|-2.$$
So we can continue the process to obtain sequences $(y_1^2, \ldots, y_n^2)$, $(\alpha_1^2, \ldots, \alpha_n^2)$, $\ldots$, $(y_1^t, \ldots, y_n^t)$, $(\alpha_1^t, \ldots, \alpha_n^t)$ such that $g(y_1^i, \ldots, y_n^i)<g(y_1^{i-1}, \ldots, y_n^{i-1})$ for $1\le i\le t$. We will stop at step $t$ if  $(\alpha_1^t, \dots, \alpha_n^t)=(0, \ldots, 0)$, such $t$ exits since $\sum_{i=1}^n|\alpha_i^0|$ is finite and decreases by 2 in each iteration.
Therefore,
$$\prod\limits_{v\in e_0}d'_1(v)=g(y_1^t, \ldots, y_n^t)<g(y_1^0, \ldots, y_n^0)=\prod\limits_{v\in e_0}d_1(v)$$
for $t>0$. If $t=0$ then $\alpha_i^0=0$ for each $i\in [n]$. So $d_1(v_{\pi(i)})=y_i^0=d'_1(v_i)$ for $i\in [n]$. Therefore, $\prod\limits_{v\in e_0}d'_1(v)=\prod\limits_{v\in e_0}d_1(v)$ if and only if $\mathcal{H}'\cong\mathcal{H}$.
This completes the proof of (b).

\end{proof}

A hypergraph $\mathcal{H}$ is called {\it stable} under edge-switching if the hypergraph obtained from the edge-switching operation with respect to any edge $e$ of $\mathcal{H}$ is isomorphic to $\mathcal{H}$. For example, the $n$-uniform hypergraph associated with the $K_n$-covered graph $S_{N, n-1}$ is stable under edge-switching operation.
\begin{lem}\label{LEM: Stable}
Let $\mathcal{H}$ be a stable hypergraph. If $\mathcal{H}$ is connected then $\Delta_1(H)=|E(\mathcal{H})|$.
\end{lem}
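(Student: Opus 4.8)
The plan is to prove the slightly stronger assertion that some single vertex of $\mathcal{H}$ is contained in \emph{every} edge; since any vertex lies in at most $|E(\mathcal{H})|$ edges, this immediately yields $\Delta_1(\mathcal{H})=|E(\mathcal{H})|$. I would first dispose of the degenerate case $E(\mathcal{H})=\emptyset$, where connectedness forces $|V(\mathcal{H})|\le 1$ and the claim is trivial, and then assume $\mathcal{H}$ has at least one edge. For an edge $e_0$ let $m(e_0)$ denote the number of edges of $\mathcal{H}$ meeting $e_0$, counting $e_0$ itself; in the notation of Lemma~\ref{LEM: ES} this is $m(e_0)=k+1$.

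The key step is to show that $m(e_0)\le\Delta_1(\mathcal{H})$ for \emph{every} edge $e_0$. To see this I would perform the edge-switching operation with respect to $e_0$ for an arbitrary ordering $e_0=(v_1,\dots,v_n)$. Each edge $e_i$ adjacent to $e_0$ has $n_i=|e_i\cap e_0|\ge 1$, so its replacement $e_i'=\{v_1,\dots,v_{n_i}\}\cup(e_i-e_0)$ contains $v_1$; together with $e_0$ itself this shows that $v_1$ lies in all $k+1$ edges of $\mathcal{H}'$ meeting $e_0$, while the edges disjoint from $e_0$ are left untouched (and still avoid $v_1$). Hence $\Delta_1(\mathcal{H}')\ge d_{\mathcal{H}'}(v_1)=k+1=m(e_0)$, and since $\mathcal{H}$ is stable, $\mathcal{H}'\cong\mathcal{H}$, giving $\Delta_1(\mathcal{H})=\Delta_1(\mathcal{H}')\ge m(e_0)$. (If stability is only assumed for one ordering per edge, I would note that $f(\mathcal{H}')$, and hence by the equality case of Lemma~\ref{LEM: ES}(b) the property $\mathcal{H}'\cong\mathcal{H}$, depends only on the multiset $\{n_1,\dots,n_k\}$ and not on the chosen ordering, so nothing is lost.)

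Next I would pick a vertex $u$ with $d_{\mathcal{H}}(u)=\Delta_1(\mathcal{H})=:d$ and let $e_0$ be any edge containing $u$. All $d$ edges through $u$ meet $e_0$, so $m(e_0)\ge d$, and combined with the previous step this forces $m(e_0)=d$. Therefore the $d$ edges through $u$ are precisely the $d$ edges meeting $e_0$, so every edge of $\mathcal{H}$ intersecting $e_0$ contains $u$ — and this holds for each edge $e_0$ incident to $u$. Finally I would propagate this along $\mathcal{H}$: the edges through $u$ pairwise intersect at $u$, and any edge adjacent to one of them is again an edge through $u$; so for an arbitrary edge $f$, choosing a chain $e_0=g_0,g_1,\dots,g_\ell=f$ of consecutively intersecting edges (which exists because $\mathcal{H}$ is connected) and inducting along it shows $f\ni u$. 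Hence $u$ lies in every edge, i.e. $d_{\mathcal{H}}(u)=|E(\mathcal{H})|$, so $\Delta_1(\mathcal{H})=|E(\mathcal{H})|$.

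I expect the main obstacle to be the key step: verifying carefully that one application of the switching operation really places $v_1$ in all $k+1$ edges meeting $e_0$ while changing no degree outside $e_0$, and being precise about the sense in which stability is invoked. Once that inequality is in hand, the counting with $u$ and the connectivity propagation are routine.
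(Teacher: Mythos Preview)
Your argument is correct. Both proofs exploit the same underlying fact—that stability forces, for any edge $e_0$, some vertex to lie in every edge meeting $e_0$—but reach it differently. The paper argues by contradiction: taking a maximum-degree vertex $x$ with incident edges $e_1,\dots,e_t$ and an edge $e_{t+1}$ meeting $e_t$ but missing $x$, it invokes stability (implicitly via the equality case of Lemma~\ref{LEM: ES}) to deduce that the intersections $e_i\cap e_t$ are nested, so any $y\in e_{t+1}\cap e_t$ lies in all of $e_1,\dots,e_{t+1}$ and outranks $x$. You instead isolate the clean inequality $m(e_0)\le\Delta_1(\mathcal{H})$ by performing one explicit switch and reading off $d_{\mathcal{H}'}(v_1)=k+1$, then combine it with the trivial bound $m(e_0)\ge d$ at an edge through a maximum-degree vertex and propagate by connectivity. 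Your route is more self-contained—it uses only the definition of the operation and the bare fact $\mathcal{H}'\cong\mathcal{H}$, not the finer equality analysis—while the paper's is a line shorter but leans on the nested-intersection consequence of stability. One small point worth making explicit in your write-up: the equality $d_{\mathcal{H}'}(v_1)=k+1$ presumes the switched edges $e_0,e_1',\dots,e_k'$ are pairwise distinct, which is guaranteed here because $\mathcal{H}'\cong\mathcal{H}$ forces $|E(\mathcal{H}')|=|E(\mathcal{H})|$.
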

\begin{proof}
Let $x$ be the vertex with maximum 1-degree in $V(\mathcal{H})$ and let $e_1, \ldots, e_t$ be all the edges containing $x$. If $t=|E(\mathcal{H})|$ then we are done. Now suppose $t<|E(\mathcal{H})|$. Since $\mathcal{H}$ is connected, there is an edge $e_{t+1}\in E(\mathcal{H})$ such that $x\notin e_{t+1}$ but $e_{t+1}\cap e_j\not=\emptyset$ for some $j\in [t]$. Without loss of generality, assume $e_{t+1}\cap e_t\not=\emptyset$. For each $i\in[t-1]$, since $x\in e_i\cap e_t$ but $x\notin e_{t+1}$, we have $e_{t+1}\cap e_t\subseteq e_i\cap e_t$ because $\mathcal{H}$ is stable under edge-switching. Thus for each vertex $y\in e_{t+1}\cap e_t$, we have $d_1(y)\ge t+1>d_1(x)$, a contradiction.
\end{proof}

\noindent{\bf Remark B:} Note that $f(\mathcal{H})$ is finite for any finite hypergraph $\mathcal{H}$. So, by (b) of Lemma~\ref{LEM: ES}, we will obtain a stable hypergraph  from $\mathcal{H}$ after finite times of edge-switching operations with respect to edges of $\mathcal{H}$.

Now we are ready to give the proof of Theorem\ref{THM: main}.
\begin{proof}[Proof of Theorem~\ref{THM: main}:] We proof the theorem by induction on $t$. As mentioned in Remark A, the most difficult thing is to tackle the base case $t=3$.

\noindent{\bf The base case:} $t=3$.

We use induction on $n$.

For $n=1$,
it is a trivial case.
For $n=2$, $S_{N,1}\cong K_{N-1,1}$.
We show by induction on $N$ that $i_3(G)\le {N-1\choose 3}=i_3(S_{N,1})$ for any  $K_2$-covered graph $G$ on $N$ vertices, and $S_{N,1}$ is the unique extremal graph if $N\ge 4$. For $N\le4$, one can directly check the truth of the statement.
Now assume  $N\ge{5}$ and the result is true for all $K_2$-covered graphs of order less than $N$.
Let $G$ be a $K_2$-covered graph of order $N$. Without loss of generality, assume $G$ is edge-critical (otherwise, we may choose a minimum $K_n$-covered spanning subgraph $G'$ of $G$ to replace $G$. Clearly, $G'$ is edge-critical of order $N$ and $i_3(G')\ge i_3(G)$).
By (4) of Observation~\ref{OBS: o1}, $\delta(G)=1$.
If $G$ is 1-regular
then we can  calculate directly that $i_3(G)=8{N/2\choose3}<{N-1\choose 3}$.
Now assume $G$ is not 1-regular. Then we can choose $x\in V(G)$ such that $d_G(x)=1$  and $x$ has a neighbor $y$ of degree larger than one. Thus $G-x$ is also $K_2$-covered. Thus by induction hypothesis,  we have
$$i_3(G)=i_3(G-{x})+i_2(G-{N_G[x]})\le{{N-2\choose 3}+{	N-2\choose 2}}={ N-1\choose  3},$$
the equality holds if and only if $G-{x}\cong S_{N-1,1}$ and $G-{N_G[x]}\cong \overline{K_{N-2}}$, i.e., $G\cong S_{N,1}$.

Now assume $n>2$ and, for any $K_{n-1}$-covered graph $G'$ of order $N\ge n-1$, we have $i_3(G')\le {N-n+2\choose 3}$, and  the equality holds if and only if $G'\cong S_{N,n-2}$ for $N\ge n+1$.   Let $G$ be a $K_{n}$-covered graph on $N\ge n+2$ vertices. Suppose $i_3(G)\ge {N-n+1\choose 3}$. We also assume $G$ is edge-critical here (otherwise, we may choose an edge-critical spanning subgraph of $G$ to replace it).
\begin{claim}\label{CLA: c1}
	$G$ is connected.
\end{claim}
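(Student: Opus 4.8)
The plan is to prove Claim~\ref{CLA: c1} by contradiction, showing that a disconnected edge-critical $K_n$-covered graph with $i_3(G)\ge\binom{N-n+1}{3}$ cannot exist, so that we may assume connectivity in the rest of the induction step. Suppose $G$ has connected components $G_1,\dots,G_m$ with $m\ge 2$, where $G_j$ has $N_j$ vertices and $N_1+\dots+N_m=N$. Since $G$ is $K_n$-covered, each $G_j$ is itself a $K_n$-covered graph, so $N_j\ge n$ for every $j$; in particular $N_j\le N-n$ for every $j$. By the outer induction hypothesis on $n$ is not directly applicable, so instead the natural tool is the trivial bound $i_3(G_j)\le\binom{N_j}{3}$ together with the superadditivity-type identity
\[
i_3(G)=\sum_{j=1}^m i_3(G_j)+\sum_{1\le j<\ell\le m} i_2(G_j)\,N_\ell \;+\!\!\sum_{\text{triples in distinct comps}}\!\!1,
\]
which I would organize more cleanly as: $i_3(G)$ equals the number of $3$-subsets of $V(G)$ that are independent, and each such subset either lies in one component, in two components ($2+1$ split), or in three components. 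The key point is that none of these counts can be as large as in $S_{N,n-1}$.

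The cleanest way to run the argument is to compare directly with $S_{N,n-1}$: an independent $3$-set of $S_{N,n-1}$ is exactly a $3$-subset of the $N-n+1$ "outer" vertices, giving $\binom{N-n+1}{t}$ (here $t=3$). For a disconnected $G$, I would like to show $i_3(G)<\binom{N-n+1}{3}$ strictly. To do this, apply the base-case statement for $t=3$ that is being proved by induction on $n$ (more carefully: one can invoke the already-established cases, or argue directly) to each component: $i_3(G_j)\le\binom{N_j-n+1}{3}$, and similarly each independent set meeting two components contributes, so that the total is at most the number of independent $3$-sets in the disjoint union $S_{N_1,n-1}\cup\dots\cup S_{N_m,n-1}$, which in turn is at most $i_3(S_{N,n-1})$ because merging two components of this form (identifying their cliques and taking the union of outer vertices) only increases the count. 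Thus $i_3(G)\le i_3(S_{N,n-1})=\binom{N-n+1}{3}$ with equality forcing $m=1$.

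There is a subtlety: the induction in the paper is on $n$ first and then (inside) on $N$, so when handling a component $G_j$ with $N_j<N$ one may legitimately use the inner induction hypothesis $i_3(G_j)\le\binom{N_j-n+1}{3}$, and this is precisely what makes the argument go through without circularity. I would therefore phrase it as: assume $G$ is a minimal counterexample (smallest $N$ for the current $n$), so every $K_n$-covered graph on fewer than $N$ vertices satisfies the bound; if $G$ were disconnected each component satisfies it, and then the convexity/merging estimate yields $i_3(G)\le\binom{N-n+1}{3}$ with equality only if there is a single component — but in the equality case one still has to rule out that a disconnected configuration achieves it, which the strict inequality in the merging step does as soon as $m\ge 2$ and $N\ge n+t-1$.

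The main obstacle I anticipate is making the "merging only increases $i_3$" step precise and checking that it is strict. Concretely, if $a,b\ge n$ and $a+b=s$, one needs $\binom{a-n+1}{3}+\binom{b-n+1}{3}+(\text{cross terms counting }2\!+\!1\text{ and }1\!+\!1\!+\!1\text{ independent triples})\le\binom{s-n+1}{3}$, and with the cross terms written out this is an inequality of the form $\binom{p}{3}+\binom{q}{3}+\binom{p}{2}q+\binom{q}{2}p+\binom{p}{1}\binom{q}{1}\binom{\text{third}}{1}\le\binom{p+q}{3}$ (with $p=a-n+1$, $q=b-n+1$) — essentially the Vandermonde/Chu identity $\binom{p+q}{3}=\sum_{i}\binom{p}{i}\binom{q}{3-i}$, so it is in fact an equality at the level of the disjoint union of the $S$-graphs and the strictness must instead come from the fact that a general disconnected $G$ has components that are only $K_n$-covered, not necessarily extremal, combined with $N_j\ge n$. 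I would handle this by first reducing to the case where each component is extremal (using the inner induction's uniqueness clause when $N_j\ge n+t-1$, and checking the small cases $n\le N_j\le n+t-2$ by hand, where $i_3(G_j)=0$), and then observing that two components each contributing $0$ to $i_3$ still cannot combine to reach $\binom{N-n+1}{3}$ once $N-n+1\ge 3$, because the outer vertex sets available are too small. The bookkeeping of these cross terms is the part that needs care, but it is routine once set up.
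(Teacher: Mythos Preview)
Your approach is genuinely different from the paper's, and it has a real gap in step (i): the assertion that $i_3(G_1\cup\cdots\cup G_m)\le i_3\bigl(S_{N_1,n-1}\cup\cdots\cup S_{N_m,n-1}\bigr)$ is \emph{false} in general. The problem is that the cross-terms in $i_3$ of a disjoint union involve $i_2(G_j)$, and a connected $K_n$-covered graph $G_j$ can have \emph{more} independent $2$-sets than $S_{N_j,n-1}$ --- indeed, as Remark~A in the paper stresses, the $i_2$-extremal graph is \emph{not} $S_{N_j,n-1}$. Concretely, take $n=3$ and let each $G_j$ be the bowtie (two triangles sharing a vertex) on $N_j=5$ vertices. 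Then $i_2(G_j)=4>3=i_2(S_{5,2})$ while $i_3(G_j)=0<1=i_3(S_{5,2})$. With $m=2$ one computes
\[
i_3(G_1\cup G_2)=0+0+4\cdot 5+4\cdot 5=40,\qquad i_3(S_{5,2}\cup S_{5,2})=1+1+3\cdot 5+3\cdot 5=32,
\]
so your claimed inequality goes the wrong way. Your ``reduce to extremal components'' fix does not help: replacing $G_j$ by $S_{N_j,n-1}$ raises $i_3(G_j)$ but lowers $i_2(G_j)$, and there is no monotonicity for the combined quantity $i_3(G_j)+(N-N_j)\,i_2(G_j)$ that you actually need. (Your merging step (ii), i.e.\ $i_3(\bigcup_j S_{N_j,n-1})<i_3(S_{N,n-1})$, is fine; the trouble is purely with (i).) Note also that your Vandermonde remark is off: the $2{+}1$ cross-term is $\binom{p_j}{2}\cdot N_\ell$, not $\binom{p_j}{2}\cdot p_\ell$, so there is no exact identity to lean on.

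The paper avoids all of this by a completely different, direct counting argument. It observes that if $G$ is disconnected then every vertex has degree at most $N-n-1$ (as well as at least $n-1$), whence $\sum_v d(v)(N-1-d(v))\ge (n-1)(N-n)N$, which lower-bounds $|\tau_1|+|\tau_2|$ (triples spanning one or two edges). Combining this with the Chakraborti--Loh lower bound on $k_3(G)$ from Theorem~\ref{THM: CL}, one gets an explicit upper bound on $i_3(G)=\binom{N}{3}-|\tau_1|-|\tau_2|-k_3(G)$ that is then shown by a short polynomial calculation to be strictly below $\binom{N-n+1}{3}$. No induction on $N$ and no component-by-component comparison is used.
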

Suppose to the contrary that $G$ is disconnected. Since $G$ is $K_{n}$-covered, we have  $n-1\le{d_G(v)}\le{N-n-1}$ for each $v\in{V(G)}$ and $N\ge 2n$.
For $i\in\{0,1,2,3\}$, let $\tau_i$ be the set of unordered triples $\{u,v,w\}\subseteq V(G)$ such that  $G[\{u,v,w\}]$ has exactly $i$ edges.	
Counting the number of the triples in $\tau_1\cup \tau_2$,
we have  $$|\tau_1|+|\tau_2|=\frac{1}{2}\sum_{v\in{V(G)}}d_G(v)(N-1-d_G(v))\ge{\frac{(n-1)(N-n)N}{2}},$$
where the inequality holds since $N\ge 2n$.
Let $N=qn+r$ with $q\ge{2}$ and $0\le{r}\le{n-1}$. By Theorem~\ref{THM: CL}, $k_3(G)\le (q+1){n\choose 3}-{n-r\choose 3}$.	Then
\begin{eqnarray*}
	i_3(G)&=&{N\choose 3}-|\tau_1|-|\tau_2|-k_3(G)\\
	      &\le&{N\choose 3}-\frac{(n-1)(N-n)N}{2}-(q+1){n\choose 3}+{n-r\choose 3}\\
	&=&\frac{n^3q^3-(3n^3-3n^2r)q^2+(2n^3-6rn^2+3r^2n)q+3rn-3r^2}{6}.
\end{eqnarray*}
So
 $${N-n+1\choose 3}-i_3(G)\ge \frac{(n^3-n)(q-1)+r[r^2-3(n-1)r+3n^2-3n-1]}{6}. $$	
Let $f(r)=r^2-3(n-1)r+3n^2-3n-1$, $0\le r\le n-1$. Then $f'(r)=2r-3(n-1)<0$. Thus $f(r)$ is a decreasing function on $0\le r\le n-1$. So $f(r)\ge f(n-1)=n^2+n-3>0$. Therefore,
$${N-n+1\choose    3}-i_3(G)\ge \frac{(n^3-n)(q-1)+rf(r)}{6}>0    $$ for any $q\ge{2}$ and $n\ge{3}$, a contradiction to the assumption. So $G$ is connected.
	
Let $\mathcal{G}=(V, E)$ be the associated hypergraph with $G$.
Since $G$ is connected, $\mathcal{G}$ is connected.
Now we apply the edge-switching operations to $\mathcal{G}$ until we get a stable hypergraph $\mathcal{G}_0$. Let $G_0$ be the 2-shadow of $\mathcal{G}_0$.
Then $G_0$ is $K_n$-covered since $\mathcal{G}_0$ is $n$-uniform.
Since the edge-switching operation does not affect the connectivity, $\mathcal{G}_0$ is connected too. By Lemma~\ref{LEM: Stable}, $\mathcal{G}_0$ has a vertex $x$ of 1-degree $|E(\mathcal{G}_0)|$, or equivalently, $x$ has degree $|V(G_0)|-1=N-1$ in $G_0$.
Then there is no independent set $I\in I_3(G_0)$ containing $x$. Clearly, $G_0-{x}$ is $K_{n-1}$-covered. By induction hypothesis, we have  $i_3(G_0-{x})\le {
	N-n+1\choose 3}$.
By Lemma~\ref{LEM: ES}, $$i_3(G)=i_3(\mathcal{G})\le i_3(\mathcal{G}_0)=i_3(G_0)=i_3(G_0-{x})\le {N-n+1\choose 3},$$
the equality holds if and only if $\mathcal{G}\cong\mathcal{G}_0$ (or equivalently, $G\cong G_0$) and $G_0-x\cong S_{N-1,n-2}$, i.e. $G\cong S_{N,n-1}$.	This completes the proof of the base case.

Now assume $t\ge 4$  and $i_{t-1}(G)\le{|V(G)|-n+1\choose t-1}$ for any $K_n$-covered graph $G$ on at least $n$ vertices, and when $|V(G)|\ge n+t-1$, the equality holds if and only if $G\cong S_{|V(G)|, n-1}$. 	
Let $G$ be a $K_n$-covered graph on $N\ge n$ vertices.  We show by induction on $N$ that $i_{t}(G)\le{N-n+1\choose t}$, and when $N\ge n+t-1$ the equality holds if and only if $G\cong S_{N, n-1}$.
	
For $N\le n+t-1$, it can be easily check that the result is true.
Now assume $N\ge n+t$ and the result holds for all $K_n$-covered graphs of order at most $N-1$. Let $G$ be a $K_n$-covered graph on $N$ vertices and we may assume $G$ is edge-critical with the same reason as aforementioned.
By (4) of Observation~\ref{OBS: o1}, $\delta(G)=n-1$. Choose $v\in V(G)$ with $d_G(v)=n-1$ and let $S=\{u : u\in N_G[v]\text{ and }d_G(u)=n-1\}$.
Since $G$ is edge-critical, $S$ is a clique contained in a unique copy of $K_n$ in $G$.
So $G-S$ is still  $K_n$-covered. Let $s=|S|$ and let
$A=\{ I\in I_t(G) :  I\cap S=\emptyset \}$ and $B=\{I\in I_t(G) : |I\cap S|=1\}$. Then $|A|=i_t(G-S)$ and $|B|\le{s\cdot i_{t-1}(G-S)}$. So
\begin{eqnarray*}
	i_t(G)&=& |A|+|B|\\
          &\le& i_t(G-S)+ s\cdot i_{t-1}(G-S)\\
	      &\le & {N-s-n+1\choose t}+s\cdot {N-s-n+1\choose	t-1}\\
	&\le &{N-s-n+1\choose t}+\sum_{i=0}^{s-1}{N-s-n+1+i\choose	t-1}\\
	&=&{N-n+1\choose t},
\end{eqnarray*}
the equality holds if and only if $s=1$ and $G-S\cong S_{N-1, n-1}$, i.e. $G\cong S_{N, n-1}$.
\end{proof}

\section{Discussions and remarks}
In this note, we completely resolve Problem~\ref{PROB: p1} when $H=K_n$. From the result of Engbers and
Galvin~\cite{EG14}, and  Gan, Loh and Sudakov~\cite{GLS15}, we know that the optimal $K_{1,d}$-covered graphs $G$ of order $N$ maximizing $i_t(G)$ have structure like $D\vee \overline{K_{n-d}}$ with $|D|=d$. It will be interesting if one can show for which graph $H$, the optimal $H$-covered graphs $G$ of order $N$ maximizing $i_t(G)$ have structure like $D\vee \overline{K_{N-|H|+1}}$ with $|D|=|H|-1$.

\noindent{Remark:} Dr. Stijn Cambie told us that not all optimal graphs are of the form $D\vee \overline{K_{N-|H|+1}}$ for the above question, and he gave an example for $N=8$ and $H=C_6$ with the extremal $H$-covered graphs having no construction like $D\vee \overline{K_{N-|H|+1}}$.


\begin{thebibliography}{99}
\bibitem{CL19}
D. Chakraborti and P. Loh.
Extremal graphs with local covering conditions, arXiv1909.04873v1.

\bibitem{CR17}
J. Cutler and A.J. Radcliffe, The maximum number of complete subgraphs of fixed
size in a graph with given maximum degree, J. Graph Theory, 84(2) (2017),
134-145.


\bibitem{EG14}
J. Engbers and D. Galvin, Counting independent sets of a fixed size in graphs with a
given minimum degree, J. Graph Theory, 76(2) (2014), 149-168.

\bibitem{Fan02}
G. Fan, Subgraph coverings and edge switchings, J. Combin. Theory Ser. B 84 (2002), 54-83.



\bibitem{G11}
D. Galvin, Two problems on independent sets in graphs, Discrete Math 311
(2011), 2105-2112.

\bibitem{GLS15}
W. Gan, P. Loh and B. Sudakov, Maximizing the number of independent sets of a fixed size, Combinatorics, Probability and Computing (2015) 24, 521-527.

\bibitem{Gao19}
J. Gao, X. Hou, The spectral radius of graphs without long cycles, Linear Algebra Appl. 566 (2019), 17-33.


\end{thebibliography}
\end{document}